\theoremstyle{plain}
\newtheorem{theorem}{Theorem}[section]
\newtheorem{lemma}[theorem]{Lemma}
\newtheorem{conjecture}[theorem]{Conjecture}
\theoremstyle{definition}
\newtheorem{remark}[theorem]{Remark}
\DeclareMathOperator{\an}{an}
\newcommand{\IZ}{\mathbb{Z}}
\newcommand{\vol}{\operatorname{vol}}
\newcommand{\diam}{\operatorname{diam}}
\newcommand{\degree}{\operatorname{deg}}
\newcommand{\coker}{\operatorname{coker}}
\global\let\c@equation=\c@theorem}
\newcounter{commentcounter}
\newcommand{\commenth}[1] 
{\stepcounter{commentcounter}
   \textbf{Comment \arabic{commentcounter} (by Herbert)}: 
{\textcolor{blue}{#1}} }
\newcommand{\commentw}[1] 
{\stepcounter{commentcounter}
   \textbf{Comment \arabic{commentcounter} (by Wolfgang)}: 
{\textcolor{red}{#1}} }
\newcommand{\version}[1]                    
{\begin{center} last edited on #1\\
last compiled on \today\\
name of texfile: \jobname
\end{center}
}
\title[On the spectral density function of the Laplacian of a graph]
{On the spectral density function of the Laplacian of a graph}
\author{Herbert Koch}
\email{koch@math.uni-bonn.de}
          \urladdr{http://www.math.uni-bonn.de/~koch/}
\author{Wolfgang L\"uck}
\email{wolfgang.lueck@him.uni-bonn.de}
          \urladdr{http://www.him.uni-bonn.de/lueck}
\address{Mathematisches Institut der Universit\"at Bonn\\
                Endenicher Allee 60\\
                53115 Bonn, Germany}
 \date{May 2012}     
 \keywords{finite graphs, spectral density function of the Laplace operator,
   approximating Fuglede-Kadison determinants and $L^2$-torsion.}  
   \subjclass[2010]{05C90,34L20, 60F70}
\begin{document}

\begin{abstract}
  Let $X$ be a finite graph. Let $E$ be the number of its edges and $d$ be its
  degree.  Denote by $F_1(X)$ its first spectral density function which counts
  the number of eigenvalues $\le \lambda^2$ of the associated Laplace
  operator. We prove the  estimate $F_1(X)(\lambda) -F_1(X)(0) \le 2
  \cdot E \cdot d \cdot \lambda$ for $0 \le \lambda < 1$. We explain how this
  gives evidence for conjectures about approximating Fuglede-Kadison
  determinants and $L^2$-torsion.
\end{abstract}

\maketitle


\section{Introduction}

Our main result is

\begin{theorem}[Estimate on the first spectral density function]
 \label{the:main_theorem}
   Let $X$ be a finite graph. Let $|E(X)|$ be the number of its edges and $\degree(X)$ be its degree.
   Denote by $F_1(X)$ its first spectral density function.

   Then we obtain for $0 \le \lambda < 1$
   \[
   F_1(X)(\lambda) -F_1(X)(0) 
   \le  2 \cdot |E(X)| \cdot \degree(X)  \cdot \lambda.
    \]
 \end{theorem}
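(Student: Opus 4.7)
My plan is first to reduce the estimate to the vertex Laplacian. Since $X$ is one-dimensional, the only non-trivial differential in its cellular chain complex is the oriented incidence map $\partial \colon \ell^2(E(X)) \to \ell^2(V(X))$, so $\Delta_1 = \partial^* \partial$ and $\Delta_0 = \partial \partial^* = D - A$, where $D$ is the vertex degree matrix and $A$ the adjacency matrix. The singular value decomposition of $\partial$ identifies the nonzero spectra of $\Delta_0$ and $\Delta_1$ with multiplicity, so that
\[
F_1(X)(\lambda) - F_1(X)(0) \;=\; \#\bigl\{\mu \in \operatorname{spec}(\Delta_0) : 0 < \mu \le \lambda^2\bigr\}
\]
is just the number of singular values of $\partial$ lying in the interval $(0,\lambda]$.

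To bound this count I would use the pointwise majorisation
\[
\chi_{(0,\lambda^2]}(\mu) \;\le\; \frac{\lambda}{\sqrt{\mu}} \qquad (\mu>0),
\]
which holds because $\sqrt{\mu}\le\lambda$ on the support. Via the Borel functional calculus applied to $\Delta_0$ restricted to $\ker(\Delta_0)^\perp$ this yields $\chi_{(0,\lambda^2]}(\Delta_0) \le \lambda\cdot\Delta_0^{-1/2}$ there, and taking the trace gives
\[
F_1(X)(\lambda)-F_1(X)(0) \;\le\; \lambda\cdot\operatorname{tr}\bigl(\Delta_0^{-1/2}|_{\ker(\Delta_0)^\perp}\bigr).
\]
The problem thus reduces to bounding this reciprocal singular value sum by $2\cdot|E(X)|\cdot\degree(X)$, after reincorporating the cut-off $\mu\le\lambda^2$.

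The main obstacle is that the full reciprocal sum $\operatorname{tr}(\Delta_0^{-1/2}|_{\ker(\Delta_0)^\perp})$ can easily exceed $2\cdot|E(X)|\cdot\degree(X)$; on a long cycle $C_n$ it grows like $n\log n$, whereas the target bound is $4n$. Hence the spectral cut-off $\mu\le\lambda^2$ cannot be dropped. To keep it active, I plan to combine the pointwise inequality $\sqrt{\mu}\cdot\chi_{(0,\lambda^2]}(\mu)\le\lambda$ with the two sharp pieces of trace data available for $\Delta_0$: the $L^1$-type estimate $\operatorname{tr}(\Delta_0) = \sum_{v\in V(X)}\degree_X(v) = 2|E(X)|$, which comes from the fact that the diagonal of $\Delta_0$ is the list of vertex degrees, and the $L^\infty$-type estimate $\|\Delta_0\|_{\operatorname{op}}\le 2\degree(X)$, coming from Gershgorin applied to $D-A$. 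Interpolating between these two via a Cauchy--Schwarz or Hölder argument in the spectral measure of $\Delta_0$ should turn $\sqrt{\mu}\chi_{(0,\lambda^2]}(\mu)\le\lambda$ into the claimed factor $\lambda$ on the right-hand side. The technical heart of the argument, and where I anticipate most of the work, is carrying out this interpolation cleanly so that the resulting constant is exactly $2\cdot|E(X)|\cdot\degree(X)$ and no worse.
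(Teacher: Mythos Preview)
Your reduction to counting nonzero eigenvalues of $\Delta_0$ in $(0,\lambda^2]$ is correct, and your observation that the full trace $\operatorname{tr}(\Delta_0^{-1/2})$ is too large is well taken. The genuine gap is in the proposed repair: the two pieces of data you want to interpolate between, namely $\operatorname{tr}(\Delta_0)=2|E(X)|$ and $\|\Delta_0\|_{\mathrm{op}}\le 2\deg(X)$, simply do not control the number of small eigenvalues to order~$\lambda$. Here is why. Suppose $0<\mu_1\le\cdots\le\mu_m$ are any real numbers subject only to $\sum_i\mu_i=2|E|$ and $\mu_i\le 2\deg(X)$; this is all that your Cauchy--Schwarz or H\"older argument in the spectral measure can see. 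Then the extremal configuration for maximising $N(\lambda)=\#\{i:\mu_i\le\lambda^2\}$ is to set as many $\mu_i$ as possible to a tiny value and pack the remaining mass at the ceiling $2\deg(X)$. The resulting upper bound is
\[
N(\lambda)\;\le\;\frac{m-|E|/\deg(X)}{1-\lambda^2/(2\deg(X))},
\]
which for small $\lambda$ tends to the constant $m-|E|/\deg(X)$, not to anything of order~$\lambda$. For a $d$-regular graph on $n$ vertices this constant is roughly $n/2$, so no interpolation based on these two quantities alone can yield the linear bound $2|E|\deg(X)\cdot\lambda$. The missing input is a \emph{lower} bound on small eigenvalues, which trace and norm data do not provide.

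The paper's argument supplies exactly this missing ingredient: it uses the spectral-gap estimate $\lambda_1(\Delta_0)\ge 1/(\operatorname{diam}(X)\cdot\operatorname{vol}(X))$ due to Chung. One first passes to a spanning tree $T$ (deleting edges only increases $F(c_1^*)(\lambda)$), and then, for a given $\lambda$, decomposes $T$ by removing a controlled number $k-1\le 2|E(T)|\deg(T)\cdot\lambda$ of edges into subtrees $T_1,\ldots,T_k$ each small enough that its spectral gap exceeds $\lambda^2$. Each $T_i$ then contributes nothing to $F(c_1^*)(\lambda)-F(c_1^*)(0)$, and the whole excess is absorbed by the change in $b_0$, namely $k-1$. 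In short, the linear factor $\lambda$ in the theorem comes from a geometric decomposition at scale $\sim 1/\lambda$, not from spectral averaging; your plan needs an input of that kind to close.
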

  
The notion of degree and spectral density function will be recalled in 
Section~\ref{sec:Estimate_on_the_first_spectral_density_function_of_a_finite_graph},
where also the proof of Theorem~\ref{the:main_theorem} is presented.

Our main motivation is the following conjecture.  For information about torsion
and $L^2$-torsion we refer to~\cite[Chapter~3]{Lueck(2002)}. We will explain in
Section~\ref{sec:higher_dimensions} why Theorem~\ref{the:main_theorem} gives
some evidence for it.

\begin{conjecture}[Approximation Conjecture for analytic $L^2$-torsion]
\label{con:Approximation_conjecture_for_analytic_L2-torsion}\ \\
Let $G$ be a group together with a sequence of in $G$ normal subgroups with finite index $[G:G_i]$
\[
G = G_0 \supseteq G_1 \supseteq G_2 \supseteq G_3 \supseteq \cdots
\]
such that $\bigcap_{i \ge0} G_i = \{1\}$. 
Let $M$ be a  closed Riemannian manifold and let $\overline{M} \to M$ be a $G$-covering.
Equip $\overline{M}$ and $M_i := \overline{M}/G_i$ with the Riemannian metrics 
for which the projections $\overline{M} \to M$ and $M_i \to M$ are local isometries.
Denote by $\rho^{(2)}_{\an}\big(\overline{M};{\mathcal N}(G)\bigr)$ the analytic $L^2$-torsion 
with respect to the cocompact proper free isometric $G$-action on $\overline{M}$
and denote  by $\rho_{\an}(M[i])$ the (analytic) Ray-Singer torsion of $M_i$.

Then
\[\rho^{(2)}_{\an}\big(\overline{M};{\mathcal N}(G)\bigr) 
= \lim_{i \to \infty} \;\frac{\rho^{(2)}_{\an}(M[i])}{[G:G_i]}.
\]
\end{conjecture}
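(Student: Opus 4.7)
The plan is to attack the conjecture through the Mellin-transform description of analytic $L^2$-torsion and to reduce the statement to two ingredients: convergence of normalised heat traces of the Hodge Laplacian on $p$-forms, and a uniform-in-$i$ control of the spectral density functions near $\lambda=0$. Concretely, one writes
\[
\rho^{(2)}_{\an}\bigl(\overline{M};\mathcal{N}(G)\bigr)
= -\frac{1}{2}\sum_p (-1)^p\, p\cdot \zeta_p'(0),
\]
with $\zeta_p(s) = \frac{1}{\Gamma(s)}\int_0^\infty t^{s-1}\bigl(\mathrm{tr}_{\mathcal{N}(G)}(e^{-t\Delta_p^{(2)}})-b_p^{(2)}\bigr)\,dt$, and an analogous zeta-regularised expression for the Ray--Singer torsion of each finite cover $M_i$, normalised by $[G:G_i]$. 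The task is then to exchange the limit $i\to\infty$ with the Mellin transform and with the meromorphic continuation at $s=0$.

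The first, easier ingredient is pointwise convergence of the normalised heat traces $\mathrm{tr}(e^{-t\Delta_{p,i}})/[G:G_i] \to \mathrm{tr}_{\mathcal{N}(G)}(e^{-t\Delta_p^{(2)}})$ for every fixed $t>0$; this follows from the local nature of the heat kernel combined with the cocompact $G$-action on $\overline{M}$. The convergence of normalised Betti numbers $b_p(M_i)/[G:G_i] \to b_p^{(2)}\bigl(\overline{M};\mathcal{N}(G)\bigr)$ is already known by L\"uck's approximation theorem. Together these handle the small-$t$ portion of the zeta integral, and they also handle the large-$t$ portion whenever the limit operator has a spectral gap above its kernel.

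The real difficulty lies in the large-$t$ regime in the absence of a gap: to move the limit inside $\int_1^\infty$ one needs a uniform estimate of the shape $F_{p,i}(\lambda)-F_{p,i}(0) \le [G:G_i]\cdot C\cdot \lambda^\alpha$ with $\alpha>0$ and a constant $C$ independent of $i$. This is exactly the role played by Theorem~\ref{the:main_theorem}: for the $0$-th combinatorial Laplacian of a graph, $|E(X)|$ and $\degree(X)$ grow linearly in $[G:G_i]$, so after normalisation the bound is uniform. A natural route to the conjecture is therefore to establish first the combinatorial analogue for simplicial $L^2$-torsion, using a generalisation of Theorem~\ref{the:main_theorem} to the higher combinatorial Laplacians on a $G$-CW-model of $\overline{M}$, and then to transfer the result to the analytic side via the Cheeger--M\"uller theorem, applied uniformly along the tower $\{M_i\}$.

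The hard part is therefore twofold. On the spectral side, one must upgrade Theorem~\ref{the:main_theorem} from $F_1$ of a graph to a uniform density bound for the combinatorial Laplacian on $p$-cells and, in the end, for the Hodge Laplacian on $p$-forms, with a constant depending only on the local geometry of $M$ and not on the index $[G:G_i]$. On the comparison side, one must make the Cheeger--M\"uller identification of analytic and combinatorial torsion effective in the family $\{M_i\}$, since otherwise the two normalised sequences could drift apart in the limit. Each of these ingredients is presently out of reach in the required uniformity, which is exactly why the statement is phrased as a conjecture and why Theorem~\ref{the:main_theorem} is offered as evidence rather than as a proof.
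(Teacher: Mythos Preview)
This statement is a \emph{conjecture}; the paper does not prove it and offers Theorem~\ref{the:main_theorem} only as evidence. Your write-up correctly recognises this and gives a coherent heuristic outline rather than a proof, so there is no ``gap'' to flag in the usual sense.

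Your route and the paper's discussion are close in spirit but differ in framing. You work on the analytic side from the start: Mellin transform of the heat trace, convergence of normalised heat kernels for small $t$, and a uniform spectral density bound near $0$ to handle large $t$; you then propose to import the combinatorial estimate via Cheeger--M\"uller. The paper instead stays on the combinatorial side throughout: it formulates the needed uniform bound directly for the cellular differentials (Conjecture~\ref{con:higher_dimensions} and its matrix version in Remark~\ref{rem:higher_dimensions_A}), deduces approximation of Fuglede--Kadison determinants (Remark~\ref{rem:Approximating-Fuglede-Kadison_determinants}), and only then passes to $L^2$-torsion (Remark~\ref{rem:approximating_L2-torsion}). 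The paper's path avoids having to make Cheeger--M\"uller uniform along the tower, which you correctly identify as an extra obstacle in your approach.

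Two points worth sharpening. First, the paper's reduction (Remark~\ref{rem:approximating_L2-torsion}) explicitly assumes that all $L^2$-Betti numbers of $\overline{M}$ vanish; your outline suppresses this hypothesis, but without it the interplay between the kernel projection and the large-$t$ integral is more delicate than you suggest. Second, in your sentence about Theorem~\ref{the:main_theorem}: for a covering $X_i\to X$ one has $|E(X_i)|=[G:G_i]\cdot |E(X)|$ but $\degree(X_i)=\degree(X)$, so only the edge count scales, not the degree. The conclusion you draw (uniform normalised bound) is nonetheless correct, and is exactly the content of inequality~\eqref{uniform_estimate} in the paper.
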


\begin{remark}[Conjecture by Bergeron-Venkatesh]
  \label{rem:Bergeron-Venkatesh}
  Conjecture~\ref{con:Approximation_conjecture_for_analytic_L2-torsion} is
  related to a conjecture by
  Bergeron-Venkatesh~\cite[Conjecture~1.3]{Bergeron-Venkatesh(2010)} which
  expresses $L^2$-torsion in terms of the growth of the torsion of the singular
  homology of $M_i$ in dimension $n$ if $M$ is a locally symmetric space of
  dimension $\dim(M) = 2n+1$. Their conjecture makes also sense if one requires
  only that $M$ is an aspherical closed $(2n+1)$-dimensional manifold.
\end{remark}

The authors want to thank Jens Vygen for fruitful
discussions and hints.  The paper was financially support by the Leibniz-Award
of the second author granted by the DFG.


\section{Estimate on the first spectral density function of a finite graph}
\label{sec:Estimate_on_the_first_spectral_density_function_of_a_finite_graph}


\subsection{The spectral density function}

Let $X$ be a finite directed graph with set of vertices $V= V(X)$ and of edges
$E = E(X)$, where each edges $e$ has an initial vertex $v_0(e)$ and a terminal vertex  $v_1(e)$. 
 Given a vertex $v \in V$, we denote by $d_v$ its
\emph{degree}, i.e., the sum of the number of edges which have
precisely one endpoint equal to $v$ and two times the number of edges
whose two endpoints agree with $v$.  The \emph{degree} $\degree(X)$ of
$X$ is the maximum of the set $\{d_v \mid v \in V\}$.  Define the
\emph{volume} of $X$ to be
\begin{eqnarray}
  \vol(X) & = & \sum_{v \in V} d_v.
  \label{volume_of_a_finite_graph}
\end{eqnarray}

The elementary so called \emph{handshaking lemma} says
\begin{eqnarray}
  \vol(X) & = & 2 \cdot |E(X)|.
  \label{vol(X)_is_2|E|}
\end{eqnarray}

We equip the graph with the \emph{path metric}, i.e., the distance of
two points is the infimum over the length of all piecewise linear
paths joining these points, where the length of a piecewise linear path
is defined in the obvious way such that every edge has length one. The
\emph{diameter} $\diam(X)$ is the maximum of the distances of any two
vertices. Obviously we have
\begin{eqnarray}
\diam(X) & \le & |E(X)|.
\label{diam_le_|E(T)|}
\end{eqnarray}

We define $C_1 (X)=l^2(E(X))$ and $C_0 (X)=l^2(V(X))$ to be the vector space of 
sequences of real numbers indexed by the elements in $E(X)$ and
$V(X)$, equipped with the standard Euclidean inner product. We obtain a
$1$-dimensional chain complex whose first differential is 
\[ 
c_1 \colon C_1(X) \to C_0(X), \quad  (a_e)_{e\in E(X)} \mapsto
\left( \sum_{e: v_1(e)=v} a_e - \sum_{e : v_0(e) = v} a_e
\right)_{v\in V(X)}.
\]
We denote the \emph{adjoint} of $c_1$ by $c_1^* : C_0 (X) \to C_1(X)$. 
The \emph{zeroth Laplace operator of $X$} 
\[
\Delta_0 \colon C_0(X) \to C_0(X)
\]
is given by $c_1 \circ c_1^*$.  

   The \emph{spectral density function $F(f)$} of a map $f \colon V \to W$ 
   of finite-dimensional Hilbert spaces is the function
   \[
   F(f) \colon [0,\infty) \to [0,\infty)
   \]
   which sends $\lambda \in [0,\infty)$ to the supremum of the
   dimensions of all subvector spaces $V_0 \subseteq V$ for which $|f(v)| \le \lambda \cdot |v|$ holds for all 
   $v \in V_0$.
  The \emph{first spectral density function} of $X$ is  
   \[
   F_1(X) := F\bigl(c_1 \colon C_1(X) \to C_0(X)\bigr).
   \]
We will be interested in $F_1(X)(\lambda)-F_1(X)(0)$.

The dimension of the kernel of $c_1$ is called the \emph{first Betti number}
$b_1(X)$,
and  the dimension of the kernel  of $c_1^*$ is called the
\emph{zeroth Betti number} $b_0(X)$, which is also the number of
components of $X$.


\subsection{Singular values}

The \emph{singular value decomposition} states that, given a $n \times m$ matrix $A$, there exist
orthogonal matrices $U$ und $V$ and a sequence of nonnegative numbers
$\sigma_i$ called singular values so that
\begin{eqnarray}
A  &= &     U^T  \left( \begin{matrix} \sigma_1 & 0 & \dots & 0 \\
                                         0    & \sigma_2 & 0 & \dots  \\
                                         \vdots & \ddots &   & \vdots 
\end{matrix}\right) V. 
\label{singular_value_decomposition}
\end{eqnarray}
The number $\sigma_i$ are uniquely determined up to their order.
They are called \emph{singular values}. Equivalently, if $f\colon  V \to W$ is
a linear map between finite dimensional Euclidean vector spaces, then
there exist an orthonormal basis in $V$ and an orthonormal basis in $W$ so that 
the matrix representing $f$ in that basis is the  diagonal 
matrix appearing in~\eqref{singular_value_decomposition}.

The nonzero eigenvalues 
of $A^T A$ and $AA^T$ coincide. They are the
squares of the nonzero singular values. The matrices $A$ and $A^T$
have the same nonzero singular values. 
The number of non-zero singular values of
$f$ less or equal to  $\lambda$ is $F(f)(\lambda) - F(f)(0)$.

We collect some of the  elementary statements discussed 
above. 

\begin{lemma} \label{lem_basics_about_spectral_density} We get for
  $\lambda \ge 0$:
   \begin{enumerate}
   \item \label{lem_basics_about_spectral_density:dual} 
    $F_1(X)(\lambda)
     - F_1(X)(0) = F(c_1^*)(\lambda) - F(c_1^*)(0);$
    \item \label{lem_basics_about_spectral_density:Laplace}
    $F\bigl(c_1^*\bigr)(\lambda) = F(\Delta_0)(\lambda^2)$
    is the sum of all eigenvalues of $\Delta_0$
   counted with multiplicity which are less or equal to $\lambda^2$;
    \item \label{lem_basics_about_spectral_density:Betti_numbers} 
    We have
    \[b_0(X) := \dim(\ker(c_1^*)) = F(c_1^*)(0)  = F(\Delta_0)(0) = \dim(\coker(c_1)),\]
     and
     \[b_1(X) := \dim(\ker(c_1)) = F_1(X)(0).\]
         \end{enumerate}
  \end{lemma}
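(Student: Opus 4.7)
The whole lemma is a direct unpacking of the singular value decomposition for $c_1$ together with the definition of the spectral density function, so the plan is to fix an SVD of $c_1$ once, read off the consequences, and check each of the three items in turn. I do not anticipate any hard step; the main thing to be careful about is keeping track of zero versus non-zero singular values.

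For part~\eqref{lem_basics_about_spectral_density:dual}, I would invoke the SVD of $c_1$ stated in~\eqref{singular_value_decomposition}: there exist orthonormal bases of $C_1(X)$ and $C_0(X)$ with respect to which $c_1$ is represented by a diagonal matrix with entries the singular values $\sigma_1,\sigma_2,\dots$ Taking the transpose of this matrix exhibits the same bases (in the opposite roles) as a singular value decomposition for $c_1^*$, so $c_1$ and $c_1^*$ have exactly the same multiset of non-zero singular values. Combined with the sentence "The number of non-zero singular values of $f$ less or equal to $\lambda$ is $F(f)(\lambda) - F(f)(0)$" recorded just above the lemma, this immediately yields the claimed equality.

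For part~\eqref{lem_basics_about_spectral_density:Laplace}, I would diagonalize $\Delta_0 = c_1 \circ c_1^*$ in the orthonormal basis of $C_0(X)$ coming from the SVD of $c_1$; the resulting diagonal entries are $\sigma_i^2$ together with zeros from the kernel part. Thus the eigenvalues of $\Delta_0$, counted with multiplicity, are precisely the squares of the singular values of $c_1$ (equivalently of $c_1^*$ by the previous step). In particular the eigenvalues of $\Delta_0$ that are $\le \lambda^2$ are in bijection with the singular values of $c_1^*$ that are $\le \lambda$, giving both equalities (one simply checks that $\ker \Delta_0 = \ker c_1^*$, which follows from $\langle \Delta_0 x, x\rangle = \|c_1^* x\|^2$).

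For part~\eqref{lem_basics_about_spectral_density:Betti_numbers}, the identities $F(c_1^*)(0) = \dim \ker c_1^*$ and $F_1(X)(0) = \dim \ker c_1$ are read off directly from the definition of $F(f)$ by taking $V_0 = \ker f$ (any subspace on which $|f(v)| \le 0 \cdot |v|$ is contained in $\ker f$, and $\ker f$ itself is admissible). The equality $F(c_1^*)(0) = F(\Delta_0)(0)$ has just been established in the previous paragraph, and $\dim \ker c_1^* = \dim \coker c_1$ follows from the orthogonal decomposition $C_0(X) = \im c_1 \oplus \ker c_1^*$.
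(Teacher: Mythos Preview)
Your proposal is correct and matches the paper's approach exactly: the paper does not give a separate proof but introduces the lemma with ``We collect some of the elementary statements discussed above'' (i.e., the singular value decomposition paragraph) and then points to analogous results in~\cite{Lueck(2002)}. Your write-up simply makes that unpacking explicit, and each step is sound; note in passing that the word ``sum'' in item~\eqref{lem_basics_about_spectral_density:Laplace} is a slip in the paper for ``number'', which you have silently and correctly read.
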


See~\cite[Lemma~2.4 on page~74, Lemma~2.11~(11) on page~77, Example~2.5
on page~75, Lemma~1.18 on page~24 ]{Lueck(2002)} for analogous statements to
Lemma~\ref{lem_basics_about_spectral_density}.

Observe that the Laplace operator and $F_1(X)$ do not depend on the orientation of the edges.


\subsection{One-dimensional CW complexes}
A finite one-dimensional CW complex is the same as a finite graph. One-cells
correspond to edges, and zero-cells to vertices.  A finite one-dimensional CW
complex with an orientation for each cell is the same as a directed graph.  The
cellular chain complex with real coefficients agrees with the chain complex
$C_*(X)$ of the graph introduced above.  The Betti numbers above are  the
Betti numbers of the CW complex defined as the dimensions of its homology
groups.


\subsection{Formulation of the main theorem}
 We want to show

    \begin{theorem}[Estimate on the first spectral density function]
    \label{the:Estimate_on_the_first_spectral_density_function}
    Let $X$ be a finite connected graph.  

    \begin{enumerate} 
    \item \label{the:Estimate_on_the_first_spectral_density_function:degree_ge_2} 
    Suppose that 
    $\deg(X) \ge 2$.  Then we get 
    \begin{multline*}
    F_1(X)(\lambda) -F_1(X)(0) 
    \\
     \le
    \begin{cases}
    0  & \text{if}\; \lambda < \frac{1}{\sqrt{2} \cdot |E(X)|};
    \\
    2 \cdot |E(X)| \cdot \degree(X)  \cdot \lambda
& \text{if}\;  \frac{1}{2 \cdot (|E(X)|-1) \cdot \deg(X)} \le \lambda < 1;
  \end{cases}
\end{multline*}
\item \label{the:Estimate_on_the_first_spectral_density_function:degree_le_1} 
   If $\degree(X) \le 1$, then $X$ is a point or the unit interval and 
   $F_1(X)(\lambda) - F_1(X)(0) = 0$ for $0 \le \lambda \le 1$;

   \item \label{the:Estimate_on_the_first_spectral_density_function:desired_estimate}
   We obtain for $0 \le \lambda < 1$
   \[
   F_1(X)(\lambda) -F_1(X)(0) 
   \le  2 \cdot |E(X)| \cdot \degree(X)  \cdot \lambda.
    \]
 \end{enumerate}
\end{theorem}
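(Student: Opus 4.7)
By Lemma~\ref{lem_basics_about_spectral_density}, I identify $F_1(X)(\lambda) - F_1(X)(0)$ with the number of singular values of $c_1$ lying in $(0, \lambda]$, equivalently the number of nonzero eigenvalues of $\Delta_0$ in $(0, \lambda^2]$. The plan is to prove parts (1) and (2) separately and then patch them together to obtain (3).

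Part (2) is a finite check: a finite connected graph with $\degree(X) \le 1$ is either a single vertex (so $C_1 = 0$) or a single edge, in which case $c_1 \colon \IR \to \IR^2$, $a \mapsto (-a, a)$, has unique singular value $\sqrt{2} > 1$. Part (3) then follows by patching the intervals: if $\degree(X) \le 1$ invoke (2); otherwise the two intervals in (1) cover $[0,1)$ because $1/(\sqrt{2}|E(X)|) \ge 1/(2(|E(X)|-1)\degree(X))$ whenever $|E(X)|, \degree(X) \ge 2$, and the single remaining corner $|E(X)| = 1$ with $\degree(X) = 2$ is a loop where $c_1 \equiv 0$ and the estimate is trivial.

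For part (1), the vanishing on $[0, 1/(\sqrt{2}|E(X)|))$ is equivalent to the lower bound $\mu_1(\Delta_0) > 1/(2|E(X)|^2)$, which I plan to derive via a graph Poincar\'e inequality. For mean-zero $f \in C_0(X)$ and any ordered pair of vertices $(u,v)$, telescoping $f(u) - f(v)$ along a shortest $u$-to-$v$ path and applying Cauchy--Schwarz yields $(f(u) - f(v))^2 \le \diam(X)\,\|c_1^* f\|^2$. Summing over all ordered pairs and using the mean-zero identity $\sum_{u,v}(f(u)-f(v))^2 = 2|V|\,\|f\|^2$, together with $\diam(X), |V|-1 \le |E(X)|$, gives $\|f\|^2 \le \tfrac{|E(X)|^2}{2}\,\|c_1^* f\|^2$, i.e.\ $\mu_1 \ge 2/|E(X)|^2$, which is stronger than required.

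The counting estimate $F_1(X)(\lambda) - F_1(X)(0) \le 2|E(X)|\degree(X)\lambda$ on the upper subinterval of (1) is the heart of the theorem and the principal obstacle. A first attempt is to use $\mathbf{1}[\sigma \le \lambda] \le \lambda/\sigma$ on $(0,\lambda]$ and bound $\sum 1/\sigma_i$ over the nonzero singular values by some combinatorial quantity; however, standard effective-resistance or diameter estimates produce bounds of order $|V|\diam(X)\lambda$, falling short of the sharp $2|E(X)|\degree(X)\lambda$. I expect the correct argument must exploit the $\{-1,0,+1\}$-incidence structure of $c_1$ more directly, for instance by constructing from a hypothetical $N$-dimensional low-singular-value subspace in $C_1(X)$ an explicit injection, via a spanning tree and edge-by-edge reconstruction coupled with a matrix-tree estimate, into a target of controlled dimension at most $2|E(X)|\degree(X)\lambda$.
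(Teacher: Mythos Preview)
Your treatment of parts~(2) and~(3), and of the vanishing on $[0,\tfrac{1}{\sqrt{2}|E(X)|})$ in part~(1), is correct. The Poincar\'e inequality you derive is essentially a direct proof of the eigenvalue bound the paper cites from Chung (Theorem~\ref{the:Estimate_on_the_first_non-trivial_eigenvalue}); your version $\mu_1 \ge 2/|E(X)|^2$ is in fact four times stronger than the $1/(\diam(X)\vol(X))$ bound and more than suffices.

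The genuine gap is, as you say yourself, the counting estimate on the upper subinterval. Your final paragraph is speculation, not an argument: there is no indication of what the ``target of controlled dimension'' is or how a matrix-tree identity would produce the factor $\deg(X)$ rather than $|V|$ or $\diam(X)$. The approach that works is quite different in spirit and does not use effective resistance or matrix-tree at all. The key idea is a \emph{decomposition into small pieces on which your own Poincar\'e bound already kills the spectrum}. Concretely: first pass to a spanning tree $T$, which can only increase $F(c_1^*)(\lambda)$ (Lemma~\ref{lem_spectral_density_and_deleting_vertices}). Given $\lambda$ in the stated range, set $P = 1/(2\lambda)$ and show the combinatorial fact (Lemma~\ref{lem:splitting_a_tree_into_a_forest}) that one can delete edges from $T$ to obtain a forest $\coprod_{i=1}^k T_i$ with each $|E(T_i)| \le P$ and $k \le |E(T)|\deg(T)/P + 1$. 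On each $T_i$ your Poincar\'e inequality gives $\mu_1(T_i) \ge 1/(2|E(T_i)|^2) \ge 1/(2P^2) > \lambda^2$, so $F(c_1(T_i)^*)(\lambda) = F(c_1(T_i)^*)(0) = 1$. Edge deletion again only increases $F(c_1^*)$, so
\[
F_1(X)(\lambda) - F_1(X)(0) \le F\bigl(c_1(T)^*\bigr)(\lambda) - 1 \le \sum_{i=1}^k F\bigl(c_1(T_i)^*\bigr)(\lambda) - 1 = k - 1 \le \frac{|E(T)|\deg(T)}{P} = 2|E(T)|\deg(T)\lambda.
\]
The point is that the only spectral cost of cutting is the growth of $b_0$, which is exactly $k-1$; the factor $\deg(T)$ enters through the tree-splitting lemma, where it controls how efficiently one can prune off subtrees of prescribed size.
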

  
For connected graphs $F_1(X)(0) = 1$ and the statement could be simplified. 
We will take that into account in the sequel, but we still prefer to state 
the main result in a formulation motivated by the corresponding 
vector valued problem. This context will be discussed in the next section. 


\subsection{Proof of the main Theorem}

For the proof of
Theorem~\ref{the:Estimate_on_the_first_spectral_density_function} we
need the following three results. The first one is taken
from~\cite[Lemma~1.9]{Chung(1997)}.

\begin{theorem}[Estimate on the first non-trivial eigenvalue]
\label{the:Estimate_on_the_first_non-trivial_eigenvalue}
Let $\lambda_1$ be the smallest non-zero eigenvalue of the zero-th Laplace operator $\Delta_0$ of $X$.
Then
\[
\lambda_1 \ge \frac1{\diam(X) \cdot  \vol(X)}.
\]
\end{theorem}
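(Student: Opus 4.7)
The plan is to deduce the bound from the Rayleigh-quotient characterization of $\lambda_1$ together with a shortest-path Cauchy--Schwarz argument. I may assume $X$ is connected with at least one edge: the disconnected case reduces to a single component, and if there are no edges then there is no non-zero eigenvalue. By Lemma~\ref{lem_basics_about_spectral_density} the kernel of $\Delta_0$ consists of the constant functions, so any $\lambda_1$-eigenfunction $f$ satisfies $\sum_{v \in V(X)} f(v) = 0$. Writing $\Delta_0 = c_1 c_1^*$ and recalling that $(c_1^* f)_e = f(v_1(e)) - f(v_0(e))$, the Rayleigh identity reads
\[
\lambda_1 \cdot \|f\|^2 \;=\; \|c_1^* f\|^2 \;=\; \sum_{e \in E(X)} \bigl(f(v_1(e)) - f(v_0(e))\bigr)^2 \;=:\; Q(f).
\]

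Next I would bound pointwise oscillations of $f$ by $Q(f)$. Given any two vertices $u, w$, choose a shortest edge path $e_1, \ldots, e_k$ from $u$ to $w$; then $k \le \diam(X)$, and telescoping with signs $\varepsilon_i \in \{\pm 1\}$ writes $f(u) - f(w) = \sum_i \varepsilon_i (f(v_1(e_i)) - f(v_0(e_i)))$. The Cauchy--Schwarz inequality then yields
\[
(f(u) - f(w))^2 \;\le\; k \cdot \sum_i (f(v_1(e_i)) - f(v_0(e_i)))^2 \;\le\; \diam(X) \cdot Q(f) \;=\; \diam(X) \cdot \lambda_1 \cdot \|f\|^2.
\]

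To finish I would pick $u, w$ realizing $\max_v f(v)$ and $\min_v f(v)$. Because $f$ has mean zero and is non-constant, $f(u) \ge 0 \ge f(w)$, so $|f(u) - f(w)| \ge \|f\|_\infty := \max_v |f(v)|$. Combined with the trivial $\|f\|^2 \le |V(X)| \cdot \|f\|_\infty^2$ this gives $\|f\|^2 \le |V(X)| \cdot \diam(X) \cdot \lambda_1 \cdot \|f\|^2$, hence
\[
\lambda_1 \;\ge\; \frac{1}{|V(X)| \cdot \diam(X)}.
\]
The stated bound then follows from the trivial inequality $|V(X)| \le \vol(X)$, valid since every vertex in a connected graph with at least one edge has degree at least one. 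I do not anticipate a genuine obstacle: the standard Rayleigh--telescoping strategy essentially writes itself, and the only slack is this final cosmetic passage from the sharper denominator $|V(X)|$ to $\vol(X) = 2 \cdot |E(X)|$ in which the statement is phrased.
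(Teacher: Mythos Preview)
Your argument is correct. The Rayleigh identity $\lambda_1\|f\|^2=\sum_e(f(v_1(e))-f(v_0(e)))^2$, the telescoping along a shortest path with Cauchy--Schwarz, and the mean-zero observation giving $\max f-\min f\ge\|f\|_\infty$ all go through exactly as you wrote, and the final step $|V(X)|\le\vol(X)$ is indeed valid for a connected graph with at least one edge.

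Note, however, that the paper does not prove this theorem at all: it simply quotes it from Chung's monograph (Lemma~1.9 there), so there is no in-paper proof to compare against. Your argument is essentially the classical one and is in the same spirit as Chung's; you even obtain the slightly sharper denominator $|V(X)|\cdot\diam(X)$ before relaxing to $\vol(X)\cdot\diam(X)$, which is a harmless weakening since only the latter form is used downstream in the paper.
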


\begin{lemma}\label{lem_spectral_density_and_deleting_vertices}
  Let $X$ be a finite graph and $Y \subseteq X$ be a subgraph obtained from $X$ by
  deleting some edges. Then we get for $0 \le \lambda$
\[
F\bigl(c_1(X)^*\bigr)(\lambda) \le F\bigl(c_1(Y)^*\bigr)(\lambda).
\]
\end{lemma}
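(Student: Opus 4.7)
The plan is to exploit the fact that deleting edges leaves the vertex set (and hence $C_0$) unchanged, and that the adjoint $c_1^*$ has a completely local formula on edges.

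First I would record that, since $Y$ is obtained from $X$ by deleting edges only, $V(Y) = V(X)$ and therefore $C_0(Y) = C_0(X)$ as Hilbert spaces, while $E(Y) \subseteq E(X)$. Next I would note the explicit formula
\[
(c_1^* \phi)(e) \;=\; \phi(v_1(e)) - \phi(v_0(e))
\]
for $\phi \in C_0(X)$ and $e \in E(X)$; this is just the coboundary, so it depends only on the edge $e$ itself. Consequently, for every $\phi \in C_0(X) = C_0(Y)$,
\[
\bigl|c_1(Y)^* \phi\bigr|^2
= \sum_{e \in E(Y)} \bigl(\phi(v_1(e)) - \phi(v_0(e))\bigr)^2
\le \sum_{e \in E(X)} \bigl(\phi(v_1(e)) - \phi(v_0(e))\bigr)^2
= \bigl|c_1(X)^* \phi\bigr|^2,
\]
where the inequality uses $E(Y) \subseteq E(X)$ and nonnegativity of the summands.

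Given this pointwise norm comparison $|c_1(Y)^* \phi| \le |c_1(X)^* \phi|$ on the common domain $C_0$, the lemma follows straight from the definition of the spectral density function: if $V_0 \subseteq C_0$ is any subspace witnessing that $|c_1(X)^* \phi| \le \lambda |\phi|$ for all $\phi \in V_0$, then the same $V_0$ satisfies $|c_1(Y)^* \phi| \le \lambda |\phi|$, so the supremum defining $F(c_1(Y)^*)(\lambda)$ is at least the supremum defining $F(c_1(X)^*)(\lambda)$.

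There is really no hard step here; the only thing to keep track of is that we must compare $c_1^*$ (not $c_1$), since the domain of $c_1$ changes when edges are deleted whereas the domain of $c_1^*$ does not. Working with the adjoint lets us take advantage of the identification $C_0(X) = C_0(Y)$ and reduces the inequality to the trivial fact that a sum of squares over a subset is no larger than the sum over the full set.
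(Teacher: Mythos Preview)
Your proof is correct and is essentially the same argument as the paper's: both establish the pointwise inequality $|c_1(Y)^*\phi|\le|c_1(X)^*\phi|$ on the common domain $C_0(X)=C_0(Y)$ and then invoke the definition of the spectral density function. The only cosmetic difference is that the paper phrases this via the factorization $c_1(Y)=c_1(X)\circ i_*$ through the isometric inclusion $i_*\colon C_1(Y)\hookrightarrow C_1(X)$ (so that $c_1(Y)^*=i_*^{\,*}\circ c_1(X)^*$ with $i_*^{\,*}$ a contraction), whereas you compute the coboundary explicitly and compare the sums over $E(Y)\subseteq E(X)$; these are two ways of saying the same thing.
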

\begin{proof}
  Let $i \colon C_1(Y) \to C_1(X)$ be the inclusion. It induces an
  isometric embedding $i_* \colon C_1(Y) \to
  C_1(X)$. Since $c_1(Y) = c_1(X) \circ i_*$, the claim follows directly from the definitions.
\end{proof}

\begin{lemma} \label{lem:splitting_a_tree_into_a_forest} Let $T$ be a
  finite tree. Let $P$ be a real number with 
  $P \le (|E(T)|-1)\cdot  \degree(T)$.  Then we can remove an appropriate collection of edges
  such that the resulting forrest  is a disjoint union 
  $\coprod_{i = 1} ^k T_i$ of trees $T_1$, $T_2$, \ldots, $T_k$ with the property
  that $1 \le k \le \frac{]E(T)| \cdot \degree(T)}{P} +1$ and $E(T_i) \le
  P$ for $i = 1,2, \ldots , k$ holds.
\end{lemma}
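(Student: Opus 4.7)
My plan is to prove this by exhibiting an explicit greedy peeling procedure from a chosen root.

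I would first root $T$ at an arbitrary vertex $r$ and, for $u \in V(T)$, let $s_u$ denote the number of edges in the subtree rooted at $u$. If $s_r = |E(T)| \le P$, then $k = 1$ works. Otherwise I iterate the following \emph{basic cut}: starting at $r$ (where $s_r > P$), while the current vertex $u$ satisfies $s_u > P$, descend to a child $c$ of $u$ for which $1 + s_c$ is maximal; stop the first time a vertex $v^{*}$ with $s_{v^{*}} \le P$ is reached. Such a $v^*$ exists since every leaf has $s = 0 \le P$. Its parent $p^*$ then satisfies $s_{p^*} > P$. Cut the edge $(p^*, v^*)$; this peels off a pendant tree on $v^*$ with $s_{v^*} \le P$ edges.

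The key quantitative estimate, which I expect to be the central point, is a lower bound on the size of each basic cut. By the maximising choice of $v^*$ among the $d_{p^*}$ children of $p^*$,
\[
s_{p^*} \;=\; \sum_{c \text{ a child of } p^*}(1 + s_c) \;\le\; d_{p^*}\,(1 + s_{v^*}) \;\le\; \degree(T)\,(1+s_{v^*}),
\]
so combined with $s_{p^*} > P$ this yields $1 + s_{v^*} > P/\degree(T)$. Hence each basic cut removes strictly more than $P/\degree(T)$ edges from the component of $r$ (counting the cut edge together with the $s_{v^*}$ edges of the pendant).

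Applying the basic cut to the component of $r$ as long as it has more than $P$ edges produces a forest $T_1 \sqcup \cdots \sqcup T_k$ of trees, each with at most $P$ edges. If $J = k-1$ cuts are performed, summing the lower bound above and noting that all peeled edges are edges of $T$ gives $J \cdot P/\degree(T) < |E(T)|$, whence $k \le |E(T)|\,\degree(T)/P + 1$. The hypothesis $P \le (|E(T)|-1)\,\degree(T)$ only ensures that this bound is at least $2$, consistent with the fact that $|E(T)| > P$ forces at least one cut. The most delicate part of the argument is recognising the correct greedy rule: descending to the child with \emph{maximal} subtree (rather than to an arbitrary child) is exactly what forces $1 + s_{v^*} \ge s_{p^*}/d_{p^*}$, and thereby produces the $\degree(T)$ factor in the numerator of the bound; a naive greedy cut could instead repeatedly remove a single leaf and ruin the count on $k$.
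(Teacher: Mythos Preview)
Your argument is correct. Both your proof and the paper's are greedy peeling procedures that exploit a pigeonhole bound at a single branching vertex, but they are dual to each other in the threshold they target. The paper roots the tree at a leaf $v$ and selects the edge $e$ \emph{farthest} from $v$ whose removal leaves a far component $T'$ with at least $P/\degree(T)$ edges; it then argues, using the at most $\degree(T)-1$ children of the cut vertex inside $T'$, that $|E(T')|<P$. You instead root arbitrarily and descend along heaviest children until the subtree size first drops to $\le P$; the same pigeonhole at the parent $p^*$ then gives the lower bound $1+s_{v^*}>P/\degree(T)$ on the peeled piece. So the paper guarantees the lower bound by construction and proves the upper bound, while you guarantee the upper bound by construction and prove the lower bound. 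Your formulation is slightly more streamlined (the heavy-child descent is a standard device, and the arbitrary root avoids singling out a leaf), whereas the paper's version makes the hypothesis $P\le(|E(T)|-1)\degree(T)$ do visible work in launching the iteration; but the two arguments are equivalent in strength and yield the same bound on $k$.
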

\begin{proof}
  Next we describe the following construction on $T$.  Choose a leaf
  $v$, i.e., a vertex $v$ with $\degree(v) = 1$. Such a leaf always
  exists in a finite tree. Since $|E(T)| -1 \ge \frac{P}{\degree (T)}$,
  there exists an edge $e$ with the property (P) that after removing
  $e$ the tree splits into to trees $T'$ and $T''$ such that $T'$ contains at least $\frac{P}{\degree(T)}$
  edges and $T''$  contains $v$. 
  Namely, take for instance for $e$ the edge which has $v$ as vertex. Now choose an
  edge $e$ which has  property (P) and which has among all edges
  with  property (P) maximal distance from $v$.

  We claim that $T'$ contains at most $P$ edges.  Let $v'$ be the
  vertex in $T'$ which belongs to $e$. Let $e_1, e_2, \ldots, e_l$ be
  the edges in $T'$ which have $v'$ as vertex. Since in $T$ the
  degree of $v'$ is bounded by $\degree(T)$, we have $l \le
  \degree(T) -1$. If we remove $e_i$ from $T$, we obtain a disjoint
  union of trees $T_i' \amalg T_i''$ such that $T_i''$ contains
  $v$. The distance in $T$ of $e_i$ from $v$ is larger than the distance in $T$ of
  $e$ from $v$. Hence the tree $T_i'$ contains at most
  $\frac{P}{\degree (T)} -1$ edges.  Every edge of $T'$ is an edge of
  $T_i'$ for some $i \in \{1,2, \ldots, l\}$ or belongs to 
  $\{e_1,  e_2, \ldots,  e_l\}$. Hence
  \begin{multline*}
    |E(T')| \le l + \sum_{i =1}^l |E(T_i'| 
    \le l + \sum_{i=1}^l \left(\frac{P}{\degree (T)} -1\right) 
    \\
    = l + l \cdot
    \left(\frac{P}{\degree (T)} -1\right)
    = l \cdot \frac{P}{\degree (T)} \le \frac{P \cdot (\degree(T)
      -1)}{\degree (T)} < P.
  \end{multline*}
  Now put $T_1 := T'$. Recall that $T_1$ has at least $
  \frac{P}{\degree (T)}$ and at most $P$ edges.  If $P > (|E(T'')|-1)
  \cdot \degree(T)$, we put $T_2 = T''$ and $k = 2$.  Obviously $T_2$
  has at most $P$ edges.  If $P \le  (|E(T'')|-1)  \cdot \degree(T)$, 
  we apply the procedure to $T''$
  and the resulting tree is a disjoint union of a tree $T_2$ and a
  tree $T'''$ such that $T_2$ has at least $ \frac{P}{\degree (T)}$
  and at most $P$ edges.  Either $T'''$ becomes $T_3$ and the
  procedure stops, or we apply the construction to $T'''$.  This
  procedure has to stop after finitely many steps, since $T$ contains only finitely
  many edges. Thus we have
  removed edges from $T$ such that the resulting tree is a 
  disjoint union $\coprod_{i = 1} ^k T_i$ of trees $T_1$, $T_2$,
  \ldots, $T_k$ with the property that $|E(T_i)| \le P$ for 
  $i = 1,2,   \ldots , k$ and $\frac{P}{\degree (T)} \le |E(T_i)|$ holds for 
  $i =  1,2 , \ldots , (k-1)$.  The latter inequality implies 
  $1 \le k \le \frac{]E(T)| \cdot \degree(T)}{P} +1$ since the sum
  of the number of edges in $T_i$ for $i = 1,2, \ldots, (k-1)$ is bounded by $|E(T)|$.
\end{proof}

Now we are ready to prove
Theorem~\ref{the:Estimate_on_the_first_spectral_density_function}.
\begin{proof}[Proof of
  Theorem~\ref{the:Estimate_on_the_first_spectral_density_function}]\
  \\~\eqref{the:Estimate_on_the_first_spectral_density_function:degree_le_1}
  This follows from a direct elementary calculation.
  \\[1mm]~\eqref{the:Estimate_on_the_first_spectral_density_function:degree_ge_2}
  Let $T \subseteq X$ be a maximal tree. Then   
$\degree(T) \le \degree(X)$, $\diam(T) \le \diam(X)$, $|E(T)| \le  |E(X)|$,  $|V(T)| = |V(X)|$,
and $b_0(X) = b_0(T) = 1$. We have
  \begin{eqnarray*}
   & \frac{1}{\sqrt{2} \cdot |E(X)|}  \le  \frac{1}{\sqrt{2} \cdot |E(T)|};&
   \\
   & \frac{1}{2 \cdot (|E(X)|-1) \cdot \deg(X)}  \le   \frac{1}{\sqrt{2} \cdot |E(T)|};&
   \\
    &\frac{1}{2 \cdot (|E(T)|-1) \cdot \deg(T)} \le \frac{1}{\sqrt{2} \cdot |E(T)|}\quad \text{if}\, |E(T) \ge 2;&
   \\
   & 2 \cdot |E(T)| \cdot \degree(T)  \cdot \lambda \le 2 \cdot |E(X)| \cdot \degree(X)  \cdot \lambda.&
 \end{eqnarray*}
  We conclude from Lemma~\ref{lem_basics_about_spectral_density} and
  Lemma~\ref{lem_spectral_density_and_deleting_vertices} 
  \begin{eqnarray*}
  F_1(X)(\lambda) - F_1(X)(0)
  & = & 
  F\bigl(c_1(X)^*\bigr)(\lambda) -  F\bigl((c_1(X)^*\bigr)(0)
  \\
  & = & 
  F\bigl(c_1(X)^*\bigr(\lambda) -  b_0(X)
  \\
  &\le &
    F\bigl(c_1(T)^*\bigr)(\lambda) -  b_0(X)
  \\
  &= &
    F\bigl(c_1(T)^*\bigr)(\lambda) -  b_0(T)
  \\
  & =  &
    F\bigl(c_1(T)^*\bigr(\lambda) -  F(c_1(T)^*(0)
   \\
   & = & 
   F_1(T)(\lambda) - F_1(T)(0).
 \end{eqnarray*}
 The inequalities above and assertion~\eqref{the:Estimate_on_the_first_spectral_density_function:degree_le_1} 
show that it suffices to prove the claim in the special case $X = T$
and $\degree(T) \ge 2$.  Notice that $\degree(T) \ge 2$ implies $|E(T)| \ge 2$.

  Suppose that $0 \le \lambda <\frac{1}{\sqrt{2} \cdot E(T)}$. Let $\lambda_1(T)$
  be the smallest non-zero eigenvalue of $\Delta_0$ on $T$. Since 
  $\vol(T) = 2   \cdot |E(T)|$ by~\eqref{vol(X)_is_2|E|} and $\diam(T) \le |E(T)|$
  by~\eqref{diam_le_|E(T)|} , we conclude from
  Theorem~\ref{the:Estimate_on_the_first_non-trivial_eigenvalue}.
  \[
  0 \le \lambda^2 < \frac{1}{2 \cdot |E(T)|^2} \le \frac{1}{\diam(T) \cdot
    \vol(T)} \le \lambda_1(T),
  \]
  and hence
  \[F(\Delta_0)(\lambda^2) - F(\Delta_0)(0) = 0.
\] 
   We conclude from Lemma~\ref{lem_basics_about_spectral_density} 
  \[F_1(T)(\lambda) - F_1(T)(0) = 0.\] 
  Now suppose 
  $\frac{1}{2 \cdot (E(T)-1) \cdot \deg(T)} \le \lambda< 1$. Put
  \[P := \frac{1}{2 \cdot \lambda}.
  \]
  Then $P \le (|E(T)|-1)\cdot \degree(T)$. Hence we can apply
  Lemma~\ref{lem:splitting_a_tree_into_a_forest}.  So we can remove edges from
  $T$ such that the resulting tree is a disjoint union $\coprod_{i = 1} ^k T_i$
  of trees $T_1$, $T_2$, \ldots, $T_k$ with the property that 
  $1 \le k \le  \frac{|E(T)| \cdot \degree(T)}{P} +1$ and $|E(T_i)| \le P$ for $i = 1,2, \ldots
  ,k$ holds. Let $\lambda_1(T_i)$ be the smallest non-zero eigenvalue of
  $\Delta_0(T_i)$ on $T_i$ for $i = 1,2, \ldots, k$. Since 
   $\vol(T_i) = 2 \cdot  |E(T_i)|$ by~\eqref{vol(X)_is_2|E|} and $\diam(T_i) \le |E(T_i)|$
  by~\eqref{diam_le_|E(T)|}, we conclude from
  Theorem~\ref{the:Estimate_on_the_first_non-trivial_eigenvalue} for $i = 1,2,
  \ldots, k$
  \[
  \lambda_1(T_i) \ge \frac{1}{\diam(T_i) \cdot \vol(T_i)} 
  \ge \frac{1}{2 \cdot  |E(T_i)|^2} \ge \frac{1}{2 \cdot P^2} 
   > \frac{1}{4 \cdot P^2} = \lambda^2.
  \]
 Hence $F\bigl(\Delta_0(T_i)\bigr)(\lambda^2) - F\bigl(\Delta_0(T_i)\bigr)(0)  = 0$.    
  Lemma~\ref{lem_basics_about_spectral_density}   implies 
  \[   
   F\bigl(c_1(T_i)^*\bigr)(\lambda) - F\bigl(c_1(T_i)^*\bigr)(0)
  =   F\bigl(\Delta_0(T_i)\bigr)(\lambda^2) - F\bigl(\Delta_0(T_i)\bigr)(0) = 0.
  \]
  This together with Lemma~\ref{lem_basics_about_spectral_density} and
  Lemma~\ref{lem_spectral_density_and_deleting_vertices} implies
  \begin{eqnarray*}
    F_1(T)(\lambda)   
    & = & 
    F_1(T)(\lambda)   - b_1(T)
    \\
     & = & 
    F_1(T)(\lambda)   - F_1(T)(0)
    \\
    & = & 
    F\bigl(c_1(T)^*\bigr)(\lambda)  -  F\bigl(c_1(T)^*\bigr)(0)
    \\
    & \le &
     F\bigl(c_1(\amalg_{i = 1} ^k T_i)^*\bigr)(\lambda)  -  F\bigl(c_1(T)^*\bigr)(0)
     \\
     & = & 
     \left(\sum_{i=1}^k F\bigl(c_1(T_i)^*\bigr)(\lambda) \right) -   F\bigl(c_1(T)^*\bigr)(0) 
     \\
      & = & 
     \left(\sum_{i=1}^k \left(F\bigl(c_1(T_i)^*\bigr)(\lambda) - F\bigl(c_1(T_i)^*\bigr)(0)\right)\right)
     \\ 
     & & \quad \quad +  \left(\sum_{i=1}^k F\bigl(c_1(T_i)^*\bigr)(0)\right) -   F\bigl(c_1(T)^*\bigr)(0) 
     \\ 
     & = & 
     \left(\sum_{i=1}^k F\bigl(c_1(T_i)^*\bigr)(0)\right)  -   F\bigl(c_1(T)^*\bigr)(0) 
      \\
      & = & 
      \left(\sum_{i=1}^k b_0(T_i) \right)-   b_0(T)
      \\
      & = &
      k -1
      \\
      & \le & 
      \frac{|E(T)| \cdot \degree(T)}{P}
       \\
       & = & 
       2 \cdot |E(T)| \cdot \degree(T) \cdot \lambda.
  \end{eqnarray*}
  This finishes the proof of assertion~\eqref{the:Estimate_on_the_first_spectral_density_function:degree_ge_2}.
  \\[1mm]~\eqref{the:Estimate_on_the_first_spectral_density_function:desired_estimate} 
  This follows from assertions~\eqref{the:Estimate_on_the_first_spectral_density_function:degree_ge_2}
  and~\eqref{the:Estimate_on_the_first_spectral_density_function:degree_le_1}
  since $\frac{1}{2 \cdot (|E(X)|-1) \cdot \deg(X)}  < \frac{1}{\sqrt{2} \cdot |E(X)|}$ holds for $\degree(X) \ge 2$.
\end{proof}

\begin{remark}[General strategy] \label{rem:general_strategy} Roughly
  speaking, our method of proof deduces from the estimate for the
  lowest non-zero eigenvalue (see
  Lemma~\ref{the:Estimate_on_the_first_non-trivial_eigenvalue}) an
  estimate about the spectral density function for small $\lambda$,
  i.e., an estimate for the distribution of the small eigenvalues, see
  Theorem~\ref{the:main_theorem}.  This strategy has already been carried
  out in the paper by Grigor'yan-Yau~\cite{Grigorian-Yau(2003)}, where
  the distribution of the small eigenvalues of an elliptic operators on a manifold is  studied,
  provided that one has an estimate on the first non-zero eigenvalue. 

  Probably one could obtain Theorem~\ref{the:main_theorem} by modifying the arguments 
  of~\cite{Grigorian-Yau(2003)} for graphs. Our proof, however, is independent, short, elementary (due to
  the more elementary situation), and it yields explicit constants which will be important for the applications in
  Section~\ref{sec:Approximating_the_Kadison-Fuglede_determinant_for_finite_graphs}.
 \end{remark}


\section{Approximating the Kadison-Fuglede determinant for finite graphs}
\label{sec:Approximating_the_Kadison-Fuglede_determinant_for_finite_graphs}

Let $G$ be a group together with a sequence of in $G$ normal subgroups with finite index $[G:G_i]$
\[
G = G_0 \supseteq G_1 \supseteq G_2 \supseteq G_3 \supseteq \cdots
\]
such that $\bigcap_{i \ge0} G_i = \{1\}$. Let $p \colon \overline{X} \to X$ be a
$G$-covering of the finite graph $X$ such that $\overline{X}$ is connected. Put
$X_i := \overline{X}/G_i$.  Then the projection $X_i \to X$ is a
$[G:G_i]$-sheeted regular covering of connected finite graphs.

Consider a linear map $f \colon V \to W$ of finite-dimensional Hilbert spaces.
Let $\det^{(2)}(f)$ be its \emph{Fuglede-Kadison determinant} with respect to
the trivial group in the sense of~\cite[Definition~3.11~on
page~127]{Lueck(2002)}.  Notice that we do not require $f$ to be injective or
surjective.  If $\lambda_1$, $\lambda_2$, $\ldots$, $\lambda_r$ are the non-zero
singular values of $f$, then
\begin{eqnarray*}
{\det}^{(2)}(f)
& = &
\prod_{i=1}^r \lambda_i.
\end{eqnarray*}
With our convention the Fuglede-Kadison determinant of the zero map 
$0 \colon V \to W$ is always $1$. If $f$ is injective, then $\det^{(2)}(f)$ is the
square root of the classical determinant of the positive automorphism
$f^*f\colon V \to V$. Notice that ${\det}^{(2)}(f)$ is a real number greater
than zero and hence $\ln({\det}^{(2)}(f))$ is a well-defined real number. 

Consider the differential $c_1(\overline{X}) \colon C_1(\overline{X}) \to C_0(\overline{X})$
in the cellular $\IZ G$-chain complex of the free cocompact $G$-$CW$-complex $\overline{X}$.
Its $\IZ G$- chain modules are finitely generated free.  Let 
$c_1^{(2)}(\overline{X}) \colon C_1^{(2)}(\overline{X}) \to C_0^{(2)}(\overline{X})$ be 
the associated morphism of finitely generated Hilbert
${\mathcal N}(G)$-modules, where ${\mathcal N}(G)$ is the group von Neumann
algebra. Let $\det^{(2)}\bigl(c_1^{(2)}(X_i);{\mathcal N}(G)\bigr)$ be the
associated Fuglede-Kadison determinant in the sense of~\cite[Definition~3.11~on
page~127]{Lueck(2002)}. Then

\begin{theorem}[Determinant approximation for graphs]
\label{the:Determinant_approximation_for_graphs}
We have under the conditions above
\[
\ln\left({\det}^{(2)}\bigl(c_1^{(2)}(\overline{X});{\mathcal N}(G)\bigr)\right) 
= \lim_{i \to \infty} \frac{\ln\bigl({\det}^{(2)}(c_1^{(2)}(X_i))\bigr)}{[G:G_i]}.
\]
\end{theorem}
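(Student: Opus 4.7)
The plan is to interpret both sides of the identity as Stieltjes integrals of $\ln\lambda$ against (normalized) spectral measures, and then to combine the standard weak convergence of these measures with the new uniform small-$\lambda$ estimate provided by Theorem~\ref{the:main_theorem}. I would first rewrite
\[
\ln\bigl({\det}^{(2)}(f)\bigr) \;=\; \int_{0+}^{\infty} \ln\lambda \; dF(f)(\lambda),
\]
which by~\cite[Chapter~3]{Lueck(2002)} holds for every linear map $f$ between finite-dimensional Hilbert spaces and extends to $c_1^{(2)}(\overline X)$ when $F$ denotes the von~Neumann spectral density function; the integral omits the atom of $F$ at $0$. Because $X_i\to X$ is a local isometry we have $\degree(X_i)=\degree(X)$, so $\|c_1(X_i)\|^2\le 2\degree(X)$ uniformly in $i$ and the same bound holds for $\|c_1^{(2)}(\overline X)\|^2$. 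Both measures are therefore supported in the fixed compact interval $[0,K]$ with $K:=\sqrt{2\degree(X)}$.

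Set
\[
F_i(\lambda) := \frac{F_1(X_i)(\lambda)}{[G:G_i]}, \qquad F_\infty(\lambda) := F\bigl(c_1^{(2)}(\overline X)\bigr)(\lambda).
\]
An elementary moment computation, using that $\bigcap_i G_i=\{1\}$ forces the injectivity radius of $X_i\to X$ to tend to infinity, shows that the normalized traces $\frac{1}{[G:G_i]}\mathrm{tr}\bigl(\Delta_0(X_i)^n\bigr)$ converge to $\mathrm{tr}_{\mathcal N(G)}\bigl(\Delta_0^{(2)}(\overline X)^n\bigr)$ for every $n$. Together with Lemma~\ref{lem_basics_about_spectral_density} and the uniform compact support in $[0,K]$, this yields pointwise convergence $F_i(\lambda)\to F_\infty(\lambda)$ at every continuity point of $F_\infty$. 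For any such continuity point $\varepsilon\in(0,K)$, integration by parts and dominated convergence give
\[
\lim_{i\to\infty}\int_{\varepsilon}^{K}\ln\lambda\;dF_i(\lambda) \;=\; \int_{\varepsilon}^{K}\ln\lambda\;dF_\infty(\lambda).
\]

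The hard part, and the only place where Theorem~\ref{the:main_theorem} enters, is the logarithmic singularity at $\lambda=0$. Applying that theorem to $X_i$ and using $|E(X_i)|=[G:G_i]\cdot|E(X)|$ together with $\degree(X_i)=\degree(X)$ gives the uniform-in-$i$ bound
\[
F_i(\lambda)-F_i(0) \;\le\; 2\cdot|E(X)|\cdot\degree(X)\cdot\lambda \qquad (0\le\lambda<1).
\]
Integration by parts on $(0,\varepsilon]$ then yields $\bigl|\int_{0+}^{\varepsilon}\ln\lambda\,dF_i(\lambda)\bigr|=O\bigl(\varepsilon|\ln\varepsilon|\bigr)$ uniformly in $i$. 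The same linear bound transfers to $F_\infty$ (via pointwise convergence at continuity points, monotonicity, and the convergence $F_i(0)\to F_\infty(0)$ from Lück's approximation theorem for $L^2$-Betti numbers), so the analogous tail estimate holds for the limit integral. Letting first $i\to\infty$ and then $\varepsilon\to 0$ yields the claimed equality. Without such uniform small-$\lambda$ control, eigenvalues of $\Delta_0(X_i)$ accumulating at $0$ rapidly as $i\to\infty$ could spoil the convergence of the logarithmic integrals even though the measures themselves converge weakly.
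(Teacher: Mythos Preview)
Your proposal is correct and follows essentially the same strategy as the paper: both derive the uniform-in-$i$ estimate $\frac{F_1(X_i)(\lambda)-F_1(X_i)(0)}{[G:G_i]}\le 2|E(X)|\degree(X)\cdot\lambda$ from Theorem~\ref{the:main_theorem} (using $|E(X_i)|=[G:G_i]\,|E(X)|$ and $\degree(X_i)=\degree(X)$), express the log-determinants as integrals against the spectral density functions, invoke the known pointwise convergence $F_i\to F_\infty$ from L\"uck's approximation theory, and use the uniform linear bound near $\lambda=0$ to control the logarithmic singularity. The only cosmetic difference is that the paper packages the last step as a single application of dominated convergence to the integrand $\lambda^{-1}\bigl(F_i(\lambda)-F_i(0)\bigr)$ on $[0,K]$ (citing~\cite[Subsection~13.2.1]{Lueck(2002)} and~\cite{Lueck(2012_l2approx)}), whereas you split the integral at $\varepsilon$ and let $\varepsilon\to 0$; after integration by parts these are the same argument.
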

\begin{proof}
  Put $C:= 2 \cdot |E(X)| \cdot \degree(X)$. Since $X_i \to X$ is a regular
  $[G:G_i]$-sheeted covering, we get
\begin{eqnarray*}
\degree(X_i) & = & \degree(X);
\\
\frac{|E(X_i)|}{[G:G_i]} & = & |E(X)|.
\end{eqnarray*}
Theorem~\ref{the:main_theorem} implies for $\lambda \in [0,1)$
\begin{eqnarray}
\frac{F_1(X_i)(\lambda) - F_1(X_i)(0)}{[G:G_i]} & \le & C \cdot \lambda.
\label{uniform_estimate}
\end{eqnarray}
Notice that $C$ is independent of $i$. This implies
\begin{eqnarray}
\int_0^1 \sup\left\{\left.\frac{F_1(X_i)(\lambda) 
- F_1(X_i)(0)}{[G:G_i] \cdot \lambda} \;\right|\; i = 1,2,\ldots \right\} d \lambda 
& < & \infty.
\label{Majorizing}
\end{eqnarray}
Now Theorem~\ref{the:Determinant_approximation_for_graphs} follows from a
general strategy which will be described in details  in~\cite{Lueck(2012_l2approx)} and 
is a consequence of the material in~\cite[Subsection~13.2.1]{Lueck(2002)}. 
The basic idea is, roughly speaking, that
\begin{eqnarray*}
\frac{\ln\bigl({\det}^{(2)}(c_1^{(2)}(X_i))\bigr)}{[G:G_i]} 
& = & 
- \int_0^K \frac{F_1(X_i)(\lambda) - F_1(X_i)(0)}{[G:G_i] \cdot \lambda} d \lambda 
\\
& & \quad \quad + \ln(K) \cdot \bigl(F_1(X_i)(K) - F_1(X_i)(0)\bigr);
\\
\ln\left({\det}^{(2)}\bigl(c_1^{(2)}(\overline{X});{\mathcal N}(G)\bigr)\right) 
& = & 
- \int_0^K \frac{F_1(\overline{X})(\lambda) - F_1(\overline{X_i})(0)}{\lambda} d \lambda 
\\
& & \quad \quad + \ln(K) \cdot \bigl(F_1(\overline{X})(K) - F_1(\overline{X})(0)\bigr);
\\
F_1(\overline{X})(K) - F_1(\overline{X})(0) & = & 
\lim_{i \to \infty} \bigl(F_1(X_i)(K) - F_1(X_i)(0)\bigr),
\end{eqnarray*}
holds for the $L^2$-spectral density function $F_1(\overline{X})(\lambda)$ of
the cocompact free $G$-$CW$-complex $\overline{X}$ and an large enough real number
$K$, and one has almost everywhere
\[
\frac{F_1(\overline{X})(\lambda) - F_1(\overline{X})(0)}{\lambda}
 =  
\lim_{i \to \infty}  
\frac{F_1(X_i)(\lambda) - F_1(X_i)(0)}{[G:G_i] \cdot \lambda},
\]
so that an application of the Lebesgue's Dominated Convergence Theorem finishes the proof,
provided that~\eqref{Majorizing} holds.
\end{proof}

\begin{remark}[First Novikov-Shubin invariant]
  \label{rem:Novikov-Shubin_invariant}
  Another consequence of~\eqref{uniform_estimate} is that the first 
  Novikov-Shubin invariant of $\overline{X}$ with respect to ${\mathcal N}(G)$ is
  greater or equal to $1$. This has already been proved in general, actually,
  one does know the value in terms of $G$, see for
  instance~\cite[Theorem~2.55~(5) on page~98]{Lueck(2002)}.
\end{remark}


\section{Higher dimensions}
\label{sec:higher_dimensions}

\begin{conjecture}[Higher dimensions]\label{con:higher_dimensions}
  For every finite $CW$-complex $X$ and $p \ge 1$, there exists positive
  constants $C$, $\epsilon$ and $\alpha$ such that the $p$-th spectral density
  function $F_p(X_i) = F(c_p(X_i))$ of $X_i$ satisfies for all $\lambda \in
  [0,\epsilon)$ and all $i = 1,2, \ldots$
  \begin{eqnarray*}
    \frac{F_p(X_i)(\lambda) - F_p(X_i)(0)}{[G:G_i]} & \le & C \cdot \lambda^{\alpha}.
  \end{eqnarray*}
\end{conjecture}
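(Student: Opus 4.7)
The plan is to mirror the three-step attack that succeeds in Section~\ref{sec:Estimate_on_the_first_spectral_density_function_of_a_finite_graph}: (a) a lower bound on the smallest non-zero eigenvalue of the $p$-th combinatorial Laplacian $\Delta_p$ of a finite CW complex in terms of combinatorial data, (b) a decomposition procedure that splits $X_i$ into pieces of uniformly bounded combinatorial size, and (c) a counting argument that converts the number of pieces into an upper bound for the small-eigenvalue count of $\Delta_p$ on $X_i$, which after dividing by $[G:G_i]$ yields the desired polynomial bound in $\lambda$.

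For step~(a), the substitute for Theorem~\ref{the:Estimate_on_the_first_non-trivial_eigenvalue} would be an estimate of the form $\lambda_1(\Delta_p(Y)) \ge c_p \cdot N^{-\beta_p}$, valid for every finite subcomplex $Y$ of $\overline{X}$ with $N$ cells in dimensions $p$ and $p+1$, where $c_p$ and $\beta_p$ depend only on $X$ and $p$. A bound of this shape is known for smooth manifolds via Grigor'yan--Yau, and the natural discrete analogue is a higher Cheeger-type inequality in terms of an isoperimetric profile for $(p-1)$-coboundaries modulo $p$-cocycles. I would either invoke the Grigor'yan--Yau framework directly on the geometric realization equipped with a standard Riemannian metric, or prove a purely combinatorial version for CW complexes by min-max over $p$-cochains.

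For step~(b), one needs a higher-dimensional substitute for Lemma~\ref{lem:splitting_a_tree_into_a_forest}. The natural candidate is a $G$-equivariant "spanning $p$-subcomplex" $T \subseteq \overline{X}$ consisting of the $(p-1)$-skeleton together with a maximal collection of $p$-cells that keeps $H_{p-1}$ torsion-free and kills no $(p-1)$-cycles unnecessarily; inductively, one then cuts suitable $p$- and $(p+1)$-cells of $T$ to decompose it into pieces of size at most $P := C'/\lambda^{1/\beta_p}$. Applied to a fundamental domain, this yields a decomposition of $X_i$ whose number of pieces grows linearly in $[G:G_i]$ and in $|E_p(X)| + |E_{p+1}(X)|$, and step~(a) then forces the small-eigenvalue subspace of each piece to coincide with $\ker(\Delta_p)$ on that piece. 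The final bound on $F_p(X_i)(\lambda) - F_p(X_i)(0)$ follows, as in the graph case, from comparing the sum of kernels of the pieces to the kernel on $X_i$, giving an $\alpha$ of the order $1/\beta_p$.

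The main obstacle is step~(b). In dimension one a spanning tree has trivial $H_1$, and after removing any edge one still has a disjoint union of trees; this makes both the combinatorial bookkeeping and the Betti-number counting immediate. In higher dimensions there is no canonical "spanning $p$-tree" enjoying both (i) controlled $H_p$ after the removal of a single $p$-cell and (ii) the property that removing cells actually disconnects the complex into smaller subcomplexes in a geometrically meaningful sense. Moreover $\Delta_p$ couples $c_p$ and $c_{p+1}$, so the small eigenvalues are not detected by the $p$-cells alone, and the cutting must be adapted to both layers simultaneously. Overcoming this presumably requires a genuinely new combinatorial cutting technique, perhaps via Forman's discrete Morse theory or via recent ideas from high-dimensional expanders, and is the reason the statement is offered as a conjecture rather than a theorem.
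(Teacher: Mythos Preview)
The statement you are addressing is Conjecture~\ref{con:higher_dimensions}, and the paper does \emph{not} prove it; it explicitly says ``This conjecture seems to be hard but is very interesting.'' There is therefore no proof in the paper to compare your proposal against. What the paper does offer is (i) the one-dimensional case, Theorem~\ref{the:main_theorem}, as evidence, and (ii) Remark~\ref{rem:higher_dimensions_A}, which reformulates the conjecture in terms of matrices over $\IZ G$ but adds no argument toward a proof.

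Your proposal is not a proof but a programme, and you yourself say so in the last paragraph. As a programme it is sensible and indeed closely tracks what the paper does in dimension one: your steps (a)--(c) are exactly the higher-dimensional analogues of Theorem~\ref{the:Estimate_on_the_first_non-trivial_eigenvalue}, Lemma~\ref{lem:splitting_a_tree_into_a_forest}, and the counting at the end of the proof of Theorem~\ref{the:Estimate_on_the_first_spectral_density_function}. Your diagnosis of the obstacle is also accurate: the spanning-tree/edge-removal mechanism has no known substitute for $p\ge 2$, and $\Delta_p$ mixes $c_p$ and $c_{p+1}$, so a cutting lemma would have to control two layers at once. The paper is silent on how to overcome this, and so are you; that is precisely why the statement is recorded as a conjecture.

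In short: there is no gap to name because there is no claimed proof, and your outline is a faithful extrapolation of the paper's one-dimensional method together with an honest account of where it breaks.
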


This conjecture seems to be hard but is very interesting. It would imply for instance
the conjecture that all Novikov-Shubin invariant of a $G$-covering $\overline{M} \to M$ 
of a closed smooth Riemannian manifold $M$ are positive, see~\cite[Conjecture~7.2]{Lott-Lueck(1995)}
provided that $G$ is residually finite.

Conjecture~\ref{con:higher_dimensions} 
is equivalent to the one described in the following Remark~\ref{rem:higher_dimensions_A}.

\begin{remark}[Matrix formulation]\label{rem:higher_dimensions_A}
  Consider a matrix $A \in   M_{m,n}(\IZ G)$. 
   Let $A[i] \in M_{m,n}(\IZ[G/G_i])$ be its image under the
  canonical projection. Let $r_A \colon l^2(G)^m \to l^2(G)^n$ and $r_{A[i]}
  \colon l^2(G/G_i)^m \to l^2(G/G_i)^n$ be the induced operators given by right multiplication
with $A$ and $A_i$. Then there is
  the conjecture that there exists constants $C,\alpha,\epsilon > 0$ such that
  for all $i = 1,2,\ldots$ and $\lambda \in [0,\epsilon]$
\begin{eqnarray*}
\frac{F(r_{A[i]})(\lambda) - F(r_{A[i]})(0)}{[G:G_i]} & \le & C \cdot \lambda^{\alpha}.
\label{uniform_estimate_matrices}
\end{eqnarray*}
\end{remark}

\begin{remark}[Approximating Fuglede-Kadison determinants]
\label{rem:Approximating-Fuglede-Kadison_determinants}
The conjecture appearing in Remark~\ref{rem:higher_dimensions_A}
implies by the strategy mentioned in the proof of 
Theorem~\ref{the:Determinant_approximation_for_graphs}
\[
\ln\left({\det}^{(2)}\bigl(r_A);{\mathcal N}(G)\bigr)\right) 
= \lim_{i \to \infty} \frac{\ln\bigl({\det}^{(2)}(r_{A[i]})\bigr)}{[G:G_i]}.
\]
The   last   equation   has   been   proved   for   $G   =   \IZ$   by
Schmidt~\cite{Schmidt(1995)},    see    also~\cite[Theorem~15.53    on
page~478]{Lueck(2002)} and hence holds for all virtually cyclic groups
$G$. To the author's knowledge it seems to be unknown for all infinite
groups which are not virtually  cyclic.  For more information we refer
to the discussion in~\cite[Section~3.1]{Li-Thom(2012)}.
\end{remark}

\begin{remark}[Approximating $L^2$-torsion]
\label{rem:approximating_L2-torsion}
If  one can prove the equivalent claims in
Conjecture~\ref{con:higher_dimensions} or
Remark~\ref{rem:higher_dimensions_A}, then 
Conjecture~\ref{con:Approximation_conjecture_for_analytic_L2-torsion} 
is true, provided that 
all $L^2$-Betti number of $\overline{M}$ vanish, see~\cite{Lueck(2012_l2approx)}.
\end{remark}




\end{document}